\newtheorem{theorem}{Theorem}[section]
\newtheorem{corollary}[theorem]{Corollary}
\newtheorem{lemma}[theorem]{Lemma}
\newtheorem{proposition}[theorem]{Proposition}
\begin{document}
\title[The Nilpotency of the Nil Metric $\mathbb{F}$-Algebras]{The Nilpotency of the Nil Metric $\mathbb{F}$-Algebras}
\keywords{normed fields, metric algebras, complete metric algebras, nil algebras, nilpotent algebras, graded algebras, PI-algebras, polynomial identities, Banach algebras, K\"othe's Problem.}
\subjclass[2020]{Primary 16R10; Secondary 46H99, 16W99, 16W50, 16R40, 12J05.}
\date{April 23, 2025}

\author[De Fran\c{c}a]{Antonio de Fran\c{c}a$^\dag$}
\address{Department of Mathematics, Federal University of Campina Grande, 58429-970 Campina Grande, Para\'iba, Brazil}
\email{\href{mailto: a.defranca@yandex.com}{a.defranca@yandex.com}}
\thanks{$^\dag$The author was partially supported by Para\'iba State Research Foundation (FAPESQ), Grant \#2023/2158.}

%
\begin{abstract}
Let $\mathbb{F}$ be a normed field. In this work, we prove that every nil complete metric $\mathbb{F}$-algebra is nilpotent when $\mathbb{F}$ has characteristic zero. This result generalizes Grabiner's Theorem for Banach algebras, first proved in 1969. Furthermore, we show that a metric $\mathbb{F}$-algebra $\mathfrak{A}$ and its completion $C(\mathfrak{A})$ satisfy the same polynomial identities, and consequently, if $\mathsf{char}(\mathbb{F})=0$ and $C(\mathfrak{A})$ is nil, then $\mathfrak{A}$ is nilpotent. Our results allow us to resolve K\"othe's Problem affirmatively for complete metric algebras over normed fields of characteristic zero.
\end{abstract}

\maketitle

%
\section{Introduction}

The motivation for this work arises from a theorem by S. Grabiner, published in \cite{Grab69}, in 1969, which asserts that ``\textit{if $B$ is a Banach algebra that is also a nil algebra, then $B$ is a nilpotent algebra}''. Recall that a \textit{Banach algebra} is an associative algebra over $\mathbb{R}$ (or $\mathbb{C}$) that is also a complete normed space with its norm satisfying $|xy|\leq|x|\cdot |y|$.   	%
Our goal is to study and answer the following question:

	\begin{center}
{\bf Problem$^\bigstar$:} 
\textit{Are nil metric $\mathbb{F}$-algebras nilpotent algebras, where $\mathbb{F}$ is a normed field?}
	\end{center}

We say that an associative algebra $\mathfrak{A}$ over a field $\mathbb{F}$ is \textit{a metric $\mathbb{F}$-algebra} if $\mathbb{F}$ is a field equipped with a norm which satisfies $|ab|\leq|a|\cdot|b|$, $\mathfrak{A}$ is a metric space, and its metric and operations (addition, product and multiplication by scalar) are compatible (in a suitable sense). 
%
%
%

Algebraic structures equipped with \textit{metrics} or \textit{norms} are interesting objects of study. 
For instance, I. Kaplansky showed in \cite{Kapl47} that ``\textit{a locally compact ring without divisors of zero is either connected or totally disconnected}''. 
In \cite{Kimu49}, N. Kimura exhibited an example of a normed ring with a left unit but no right unit, and proved that if $R$ is a Banach space and a ring with a unit in which multiplication is continuous, then $R$ is isomorphic to a normed ring where $|xy|\leq|x|\cdot|y|$. 
Already in \cite{Auro57}, S. Aurora proved that if $\mathfrak{R}$ is a metric ring such that $\Vert na \Vert=n\cdot \Vert a \Vert$ for any $n\in\mathbb{N}$ and $a\in \mathfrak{R}$, then there is an extension of $\mathfrak{R}$ which is a complete normed algebra over $\mathfrak{R}$ (see Corollary on page 1300). See also the works \cite{Auro58,Auro61} due to S. Aurora. 
In turn, in \cite{Dale81}, Dales showed that any commutative nil algebra with countable basis is normable, and that there exist a commutative nilpotent algebra and other commutative algebra with a countable basis which are not normable. 
%
Afterwards, V. M\"uller gave in \cite{Mull94} an overview of results concerning Banach nil, nilpotent and PI-algebras. See also \cite{Mull90} due to V. M\"uller. 
Posteriorly, H. Ochsenius and W.H. Schikhof exhibited in \cite{OchsSchi06} an example of a $\mathbb{K}$-normed space $E$ which is complete, but not a Baire space, where $\mathbb{K}$ is a valued field (see Remark on page 75). 
 For additional reading about normed (or metric) rings/algebras, see the books \cite{Naim72} and \cite{Rick60}. 

Another motivation for studying the above-mentioned \textbf{Problem}$^\bigstar$ is that Grabiner's Theorem provides an affirmative answer to Levitzky's Problem for Banach algebras. 
Levitzky's Problem is related to Kurosh's Problem. %
In \cite{Kuro41}, A.G. Kurosh, in analogy to Burnside Problem (in Group Theory), posed the following problem: ``\textit{is every algebraic algebra a locally finite algebra?}''. Recall that an algebra is locally finite if every finitely generated subalgebra is nilpotent. 
 Afterwards in \cite{Levi43}, J. Levitzky posed that ``\textit{is every nil ring a nilpotent ring?}''. This problem is known as \textit{Levitzky's Problem}. 
In turn, in \cite{Jaco45}, N. Jacobson presented results which reduce the study of Kurosh's Problem for algebraic algebras of bounded degree to the study of nil algebras of bounded degree. 
Already in \cite{Levi46}, J. Levitzky proved that ``\textit{each nil ring of bounded index is semi-nilpotent}''. Recall that a ring is called \textit{semi-nilpotent} if each finite set of elements in this ring generates a nilpotent ring. 
 Posteriorly, in \cite{Kapl48}, I. Kaplansky proved that any nil algebra satisfying a polynomial identity is locally finite. 
So, finally, the Dubnov-Ivanov-Nagata-Higman Theorem states that, being $\mathfrak{A}$ an associative $\mathbb{F}$-algebra such that $a^n=0$ for any $a\in\mathfrak{A}$, if $\mathsf{char}(\mathbb{F})$ is equal to $0$ or greater than $n$, then $\mathfrak{A}$ is nilpotent (see \cite{DubnIvan43}, \cite{Naga52} and \cite{Higm56}). 
%

In the context of rings with gradings, A. de Fran\c{c}a and I. Sviridova proposed in \cite{Mardua01} the following problem: ``\textit{given a monoid $\mathsf{S}$ and a ring $\mathfrak{R}$ with a finite $\mathsf{S}$-grading, does $\mathfrak{R}_e$ nil imply $\mathfrak{R}$ nil/nilpotent?}''. They proved that this problem finds a positive answer in the class of $\mathsf{f}$-commutative rings, and that every positive answer to this problem implies that K\"{o}the's Problem has a positive solution. 
This last problem was proposed by G. K\"{o}the in 1930 (see \cite{Koth30}), and since then, this conjecture has been confirmed in some classes of rings, but it still does not have a general solution. K\"{o}the's Problem asks ``\textit{if a ring $\mathfrak{R}$ has no nonzero nil ideals, then does $\mathfrak{R}$ have nonzero one-sided nil ideals?}''. The K\"{o}the's Problem and Kurosh Problem are related. 
For further reading, as well as an overview, about K\"{o}the's Problem, see \cite{Smok01}, \cite{Mardua01} and their references. 

%
%

This work has only one section of results, namely \S2, which is divided into 3 subsections.
 In the initial part of section \S2, we present the main tools used: \textit{normed field}, \textit{metric algebras} and its (main) properties, and \textit{Baire Category Theorem}. And we also present some results. The main one is 

\vspace{0.15cm}
\noindent{\bf Theorem \ref{5.1}}: Let $\mathbb{F}$ be a normed field of characteristic zero, and $\mathfrak{A}$ a complete metric $\mathbb{F}$-algebra. If $\mathfrak{A}$ is nil, then $\mathfrak{A}$ is nilpotent.
\vspace{0.15cm}

In subsections \S2.1 and \S2.3, being $\mathbb{F}$ a normed field and $\mathfrak{A}$ a metric $\mathbb{F}$-algebra, we show that \textit{$\mathfrak{A}$ and its completion have the same polynomial identities} ({\bf Proposition \ref{5.6}}), and \textit{if $\mathsf{char}(\mathbb{F})=0$ and the completion of $\mathfrak{A}$ is nil, then $\mathfrak{A}$ is nilpotent} ({\bf Corollary \ref{5.5}}).

We finish this work deducing that the K\"othe's Problem has a positive answer in the class of complete metric algebras over normed fields of characteristic zero ({\bf Corollary \ref{5.9}}).

%
\section{The Nilpotency of Nil Complete Metric Algebras}

%
%

Let us begin this text by introducing some pertinent definitions and elementary results concerning normed fields and metric algebras.

Let $\mathbb{F}$ be any field. We say that $\mathbb{F}$ is a \textit{normed field} if there exists a map $|\ . \ |:\mathbb{F}\rightarrow \mathbb{R}_+$  which has the usual properties of a norm and satisfies $|-\lambda|=|\lambda|$ and $|\lambda\gamma|\leq |\lambda|\cdot|\gamma|$ for all $\lambda, \gamma \in \mathbb{F}$. 
When $\mathsf{char}(\mathbb{F})=0$, let us assume that $|n \cdot \lambda|= n\cdot|\lambda|$ for all $n\in\mathbb{N}$ and $\lambda\in\mathbb{F}$.

Now, let $(\mathfrak{A},\mathsf{d})$ be a metric space which is also an associative $\mathbb{F}$-algebra, where $\mathbb{F}$ is a normed field. We say that \textit{$\mathfrak{A}$ is a metric $\mathbb{F}$-algebra} (with respect to the metric $\mathsf{d}$) when there is $\gamma\in\mathbb{R}_+^*$ such that
	\begin{enumerate}[i)]
	\item $\mathsf{d}(a+c,b+c)\leq \gamma \mathsf{d}(a,b)$ for any $a,b,c\in\mathfrak{A}$;
	\item $\mathsf{d}(\lambda a,\lambda b)\leq \gamma |\lambda|\mathsf{d}(a,b)$ for any  $\lambda\in\mathbb{F}$ and $a,b\in\mathfrak{A}$;
	\item $\mathsf{d}(ac,bc)\leq \gamma \mathsf{d}(c,0)\mathsf{d}(a,b)$ and $\mathsf{d}(ca,cb)\leq \gamma \mathsf{d}(c,0)\mathsf{d}(a,b)$ for any $a,b,c\in\mathfrak{A}$.
	\end{enumerate}
Observe that $\gamma\geq1$ necessarily. When $(\mathfrak{A},\mathsf{d})$ is a complete metric space, $\mathfrak{A}$ is called \textit{a complete metric $\mathbb{F}$-algebra}. The notions of \textit{sequences}, \textit{convergent sequence}, \textit{limit of a sequence}, and \textit{the completion of a metric algebra} are introduced in the standard way and their usual properties can be easily verified (see \cite{Auro57} and \cite{Kapl77}). In particular, 
%
%
it is not difficult to see that the items i), ii) and iii) imply that the operations $+, \times:\mathfrak{A}\times\mathfrak{A}\rightarrow \mathfrak{A}$ and $\cdot:\mathbb{F}\times\mathfrak{A}\rightarrow \mathfrak{A}$ are continuous. Consequently, given sequences $\{a_n\}_{n\in\mathbb{N}},\{b_n\}_{n\in\mathbb{N}}\in\mathfrak{A}$ and $\{\lambda_n\}_{n\in\mathbb{N}}\in\mathbb{F}$ converging to $a,b\in\mathfrak{A}$ and $\lambda\in\mathbb{F}$, respectively, we have that 
$\lim_{n\in\mathbb{N}} (a_n+b_n)= (a+b)$, 
$\lim_{n\in\mathbb{N}} a_n b_n = ab$ and 
$\lim_{n\in\mathbb{N}} \lambda_n a_n= \lambda a$.

The main result of this work is the following theorem. 
 It generalizes Grabiner's Theorem on the nilpotency of Banach nil algebras, which was established in \cite{Grab69}.

\begin{theorem}\label{5.1}
Let $\mathbb{F}$ be a normed field of characteristic zero, and $\mathfrak{A}$ a complete metric $\mathbb{F}$-algebra. If $\mathfrak{A}$ is nil, then $\mathfrak{A}$ is nilpotent.
\end{theorem}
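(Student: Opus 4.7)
The plan is to adapt Grabiner's strategy for Banach algebras by combining the Baire Category Theorem with the Dubnov-Ivanov-Nagata-Higman Theorem.

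First, for each integer $n \geq 1$, set $F_n := \{a \in \mathfrak{A} : a^n = 0\}$. Axiom iii) makes multiplication jointly continuous, so by induction the map $a \mapsto a^n$ is continuous, and hence each $F_n$ is closed in $\mathfrak{A}$. The nil hypothesis gives $\mathfrak{A} = \bigcup_{n \geq 1} F_n$. Because $\mathfrak{A}$ is a complete metric space, the Baire Category Theorem yields some index $N$ such that $F_N$ has nonempty interior, say $B(a_0, r) \subseteq F_N$ with $a_0 \in \mathfrak{A}$ and $r > 0$.

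The heart of the argument is to promote this local vanishing to the polynomial identity $x^N = 0$ on all of $\mathfrak{A}$. Fix an arbitrary $c \in \mathfrak{A}$. Since $\mathsf{char}(\mathbb{F}) = 0$ and $|n\lambda| = n|\lambda|$, one gets $|1/n| = 1/n$, so $\mathbb{F}$ contains scalars of arbitrarily small norm. Using axiom ii), for all $\lambda \in \mathbb{F}$ with $|\lambda|$ sufficiently small the element $a_0 + \lambda c$ lies in $B(a_0, r)$, and therefore $(a_0 + \lambda c)^N = 0$. Expanding in $\lambda$ gives
$$0 = (a_0 + \lambda c)^N = \sum_{k=0}^{N} \lambda^k P_k(a_0, c),$$
where $P_k(a_0, c)$ is the sum of all noncommutative monomials containing exactly $k$ factors $c$ and $N-k$ factors $a_0$. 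Selecting $N+1$ distinct small-norm values of $\lambda$ in $\mathbb{F}$ and applying the classical Vandermonde inversion (over the field $\mathbb{F}$, acting on the $\mathbb{F}$-module $\mathfrak{A}$) yields $P_k(a_0, c) = 0$ for every $k$. In particular, $P_N(a_0, c) = c^N = 0$.

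Since $c$ was arbitrary, $\mathfrak{A}$ satisfies the polynomial identity $x^N = 0$. The Dubnov-Ivanov-Nagata-Higman Theorem then forces $\mathfrak{A}$ to be nilpotent, because $\mathsf{char}(\mathbb{F}) = 0$. The main delicate point will be the Vandermonde extraction step: one must verify simultaneously that enough distinct small-norm scalars exist in $\mathbb{F}$ (this is precisely where the assumption $|n\lambda| = n|\lambda|$ in characteristic zero is used), that axioms i)--iii) really do guarantee $a_0 + \lambda c \in B(a_0, r)$ for $|\lambda|$ small, and that the polynomial identity in $\lambda$ forces each coefficient $P_k(a_0, c) \in \mathfrak{A}$ to vanish. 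Once these ingredients are lined up, the Baire and Dubnov-Ivanov-Nagata-Higman pieces assemble the conclusion.
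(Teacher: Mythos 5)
Your proposal is correct and follows essentially the same route as the paper: closedness of the sets $\{a : a^N=0\}$, Baire category to get a ball inside one of them, small scalars $(k\cdot 1_{\mathbb{F}})^{-1}$ from the characteristic-zero norm assumption, a Vandermonde argument to kill all coefficients of $(a_0+\lambda c)^N$ viewed as a polynomial in $\lambda$, and finally Dubnov--Ivanov--Nagata--Higman. The only cosmetic difference is that you extract $c^N$ as the top coefficient whereas the paper writes $p(t)=(a_0+t(b-a_0))^n$ and evaluates $p(1_{\mathbb{F}})=b^n$; these are interchangeable.
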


Before proving this theorem, let us present and demonstrate two facts.

\begin{lemma}\label{mainlemma}
Let $\mathbb{F}$ be any field, $\mathfrak{A}$ an $\mathbb{F}$-algebra, and $\mathfrak{A}[t]$ the polynomial algebra in the variable $t$ with coefficients in $\mathfrak{A}$. Given any polynomial $p(t)\in\mathfrak{A}[t]$ of degree $n$, if there exist $\lambda_1,\dots,\lambda_{n+1}\in\mathbb{F}$ pairwise distinct such that $p(\lambda_i)=0$ for all $i=1,\dots,n+1$, then $p(t)\equiv0$.
\end{lemma}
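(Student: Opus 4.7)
The statement is the familiar fact that a polynomial of degree $n$ has at most $n$ roots, now generalized to coefficients in an arbitrary (possibly non-commutative, possibly non-unital) $\mathbb{F}$-algebra $\mathfrak{A}$. The key structural feature that keeps the classical argument running is that the putative roots $\lambda_i$ lie in $\mathbb{F}$, so they commute with every coefficient of $p(t)$ through the $\mathbb{F}$-module structure on $\mathfrak{A}$, and every nonzero $\lambda_i - \lambda_j$ is invertible as a scalar in $\mathbb{F}$ and hence left-cancellable on $\mathfrak{A}$.

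My plan is to proceed by induction on the degree $n$. For $n=0$, a constant polynomial $p(t) = a_0$ with $p(\lambda_1) = 0$ obviously satisfies $a_0 = 0$. For the inductive step, I would write $p(t) = \sum_{k=0}^{n} a_k t^{k}$ and apply the identity $t^{k} - \lambda_1^{k} = (t - \lambda_1)(t^{k-1} + \lambda_1 t^{k-2} + \cdots + \lambda_1^{k-1})$, which is valid in $\mathfrak{A}[t]$ because the scalar $\lambda_1$ and the central variable $t$ commute with the coefficients. This produces a factorization $p(t) - p(\lambda_1) = (t - \lambda_1)\, q(t)$ with $q(t) \in \mathfrak{A}[t]$ of degree at most $n-1$; since $p(\lambda_1) = 0$, we obtain $p(t) = (t - \lambda_1)\, q(t)$. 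Evaluating at $\lambda_i$ for $i = 2, \ldots, n+1$ yields $(\lambda_i - \lambda_1)\, q(\lambda_i) = 0$, and multiplying by $(\lambda_i - \lambda_1)^{-1} \in \mathbb{F}$ gives $q(\lambda_i) = 0$. Thus $q$ has degree at most $n-1$ and vanishes on $n$ pairwise distinct scalars, so by induction $q \equiv 0$, and hence $p \equiv 0$.

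An equivalent and perhaps more transparent packaging of the argument is to regard the $n+1$ equations $p(\lambda_i) = 0$ as a linear system $V \mathbf{a} = \mathbf{0}$ inside the $\mathbb{F}$-module $\mathfrak{A}^{n+1}$, where $V = (\lambda_i^{k})_{1 \le i \le n+1,\ 0 \le k \le n}$ is the usual Vandermonde matrix and $\mathbf{a} = (a_0, \ldots, a_n)^{T}$. Because $\det V = \prod_{i<j}(\lambda_j - \lambda_i) \in \mathbb{F}^{*}$, the matrix $V$ is invertible over $\mathbb{F}$, and left-multiplication by $V^{-1}$ (whose entries act on $\mathfrak{A}$ by scalar multiplication) forces $\mathbf{a} = \mathbf{0}$.

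I do not anticipate any genuine obstacle here. The only point requiring a moment of care is verifying that cancellation by nonzero elements of $\mathbb{F}$ is legitimate on $\mathfrak{A}$, but this is immediate from the $\mathbb{F}$-algebra axioms. Notably, the lemma needs neither the metric structure, nor completeness, nor the characteristic-zero hypothesis introduced for Theorem \ref{5.1}; it is a purely algebraic preliminary.
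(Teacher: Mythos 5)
Your proposal is correct. The second packaging you offer --- viewing the equations $p(\lambda_i)=0$ as the Vandermonde system $V\mathbf{a}=\mathbf{0}$ with $\det V=\prod_{i<j}(\lambda_j-\lambda_i)\neq 0$, so that $V$ is invertible over $\mathbb{F}$ and scalar action of $V^{-1}$ forces $\mathbf{a}=\mathbf{0}$ --- is precisely the paper's own proof, while your primary argument by induction on the degree via the factorization $p(t)=(t-\lambda_1)\,q(t)$ is an equally valid alternative that avoids invoking invertibility of the Vandermonde matrix at the cost of a (routine) induction.
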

\begin{proof}
Take any $p(t)=a_0+a_1 t+ a_2 t^2+\cdots+ a_n t^n \in\mathfrak{A}[t]$. Given any $\lambda_1,\dots,\lambda_{n+1}\in\mathbb{F}$, it follows that
\begin{equation}\nonumber
	\begin{bmatrix} p(\lambda_1)\\
		p(\lambda_2)\\
		\vdots\\
		p(\lambda_{n+1})\\
	\end{bmatrix}
=
	\Delta
	\cdot
	\begin{bmatrix} 
	a_0 \\
	a_1 \\
	\vdots\\
	a_n\\
\end{bmatrix} \ , \
\mbox{ where } \
\Delta=\begin{bmatrix} 
		1 & \lambda_1 & \lambda_1^2 & \cdots & \lambda_1^n\\
		1 & \lambda_2 & \lambda_2^2 & \cdots & \lambda_2^n\\
	\vdots & \vdots & \vdots & \ddots &\vdots \\
		1 & \lambda_{n+1} & \lambda_{n+1}^2 & \cdots & \lambda_{n+1}^n\\
	\end{bmatrix} \ .
\end{equation}
When $\lambda_i\neq\lambda_j$ for all $i\neq j$, $\Delta$ is a Vandermonde matrix, which is invertible (see \S4.3, p.124, in \cite{HoffKunz71}). Consequently, assuming that $\lambda_r$'s are pairwise distinct such that $p(\lambda_r)=0$, it follows that $a_r=0$ for all $r=0,1,\dots,n$, and so the result follows.
\end{proof}

%
%

Now, recall that Baire Category Theorem states that, given a complete metric space $Y$, any countable collection $\{Y_n\}$ of closed sets of $Y$ each of which has empty interior in $Y$, their union $\bigcup Y_n$ also has empty interior in $Y$ (see \cite{Munk00}, Theorem 48.2, p.296). Consequently, if $Y$ is a nonempty complete metric space which is equal to the countable union $\bigcup Y_n$, we have that at least one of the sets $\overline{Y}_n$ has a nonempty interior (see Exercise 1, p.298, \cite{Munk00}), where $\overline{Y}_n$ is the closure of $Y_n$.

\begin{lemma}\label{maintheorem}
Let $\mathbb{F}$ be a normed field with $\mathsf{char}(\mathbb{F})=0$, and $\mathfrak{A}$ a complete metric $\mathbb{F}$-algebra. If $\mathfrak{A}$ is nil, then $\mathfrak{A}$ is nil of bounded index.
\end{lemma}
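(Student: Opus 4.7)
The plan is to mimic Grabiner's classical Baire-category argument, combined with Lemma~\ref{mainlemma} in place of the usual real/complex analyticity. For each $n\in\mathbb{N}$ I would set
$$Y_n = \{a\in\mathfrak{A}: a^n = 0\}.$$
Continuity of the product (a consequence of axiom~(iii)) makes $a\mapsto a^n$ continuous, so each $Y_n$ is closed as the preimage of $\{0\}$. The nil hypothesis gives $\mathfrak{A}=\bigcup_{n\ge 1}Y_n$, and completeness combined with the version of the Baire Category Theorem recalled just before this lemma produces some index $n_0$, some $a_0\in\mathfrak{A}$, and some $\varepsilon>0$ with $B(a_0,\varepsilon)\subseteq Y_{n_0}$.

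Next I would transport this ball to a neighborhood of the origin. By axiom~(i), $\mathsf{d}(a_0+c,\, a_0)\le \gamma\,\mathsf{d}(c,0)$, so taking $\delta=\varepsilon/\gamma$ one obtains $(a_0+c)^{n_0}=0$ whenever $\mathsf{d}(c,0)<\delta$. Using axiom~(ii) to estimate $\mathsf{d}(\lambda a,0)\le \gamma|\lambda|\,\mathsf{d}(a,0)$, this reads: for any $a\in\mathfrak{A}$ and any $\lambda\in\mathbb{F}$ with $|\lambda|$ sufficiently small, $(a_0+\lambda a)^{n_0}=0$.

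The decisive step is then to expand
$$p(t) := (a_0+ta)^{n_0}$$
as a polynomial of degree $n_0$ in the indeterminate $t$ with coefficients in $\mathfrak{A}$; its leading coefficient is exactly $a^{n_0}$. If I can exhibit $n_0+1$ pairwise distinct scalars $\lambda_1,\ldots,\lambda_{n_0+1}\in\mathbb{F}$ of norm small enough for the previous paragraph to apply, then $p(\lambda_i)=0$ for every $i$, and Lemma~\ref{mainlemma} forces $p\equiv 0$. In particular $a^{n_0}=0$, and since $a\in\mathfrak{A}$ was arbitrary this proves that $\mathfrak{A}$ is nil of bounded index $n_0$.

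The main obstacle — and the only point at which the arithmetic hypotheses on $\mathbb{F}$ are actually used — is the production of those scalars. Since $\mathsf{char}(\mathbb{F})=0$, the integers $1,2,\ldots,n_0+1$ are pairwise distinct in $\mathbb{F}$. The axiom $|n\mu|=n|\mu|$ applied with $\mu=1/N$ and $n=N$ yields $N\cdot |1/N|=|1|$, hence $|1/N|=|1|/N\to 0$, and more generally $|i/N|=i|1|/N$. Therefore the scalars $\lambda_i:=i/N$ for $i=1,\ldots,n_0+1$ are pairwise distinct and satisfy $|\lambda_i|\le (n_0+1)|1|/N$, which can be made as small as desired by enlarging $N$. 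Once this small-scalar step is in hand the rest is a routine assembly of Baire, the continuity of the algebra operations, and the Vandermonde-type Lemma~\ref{mainlemma}.
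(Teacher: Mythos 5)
Your proposal is correct and follows essentially the same route as the paper: closedness of the sets $\{a:a^n=0\}$, the Baire Category Theorem to find a ball inside some $B_{n_0}$, small scalars $i/N$ produced via $|n\lambda|=n|\lambda|$ in characteristic zero, and the Vandermonde Lemma~\ref{mainlemma} to kill the polynomial $p(t)$. The only (immaterial) difference is the parametrization: the paper takes $p(t)=(a_0+t(b-a_0))^{n}$ and reads off $b^{n}=p(1_{\mathbb{F}})$, whereas you take $p(t)=(a_0+ta)^{n_0}$ and extract $a^{n_0}$ as the leading coefficient.
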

\begin{proof}

For each $i\in\mathbb{N}$, consider the subset of $\mathfrak{A}$ given by $B_i=\{a \in\mathfrak{A}: a^i=0\}$. Note that $\mathfrak{A}=\bigcup_{i\in\mathbb{N}}B_i$ and $B_i\subseteq B_{i+1}$ for all $i\in\mathbb{N}$. Let us show that $B_n=\mathfrak{A}$ for some $n\in\mathbb{N}$.

\textbf{Claim 1:} Each $B_i$ is a closed set of $\mathfrak{A}$. Indeed, given $a\in\overline{B_i}$, there is a sequence $\{a_r \}_{r\in\mathbb{N}}\in B_i$ converging to $a$, and hence, as the product $\times:\mathfrak{A}\times\mathfrak{A}\rightarrow \mathfrak{A}$ is continuous, it follows that $a^i = \left( \lim_{r\in\mathbb{N}} a_r \right)^i=\lim_{r\in\mathbb{N}} (a_r^i)=0$, and so $a\in B_i$, and $B_i$ is closed. 
Now, by Baire Category Theorem, there exist $a_0\in\mathfrak{A}$, $r\in\mathbb{R}_+^*$ and $n\in\mathbb{N}$ such that $B_r(a_0)\subseteq \overline{B}_n=B_n$, where $B_r(a_0)=\{a\in\mathfrak{A}: \mathsf{d}(a_0,a)<r \}$. 
\textbf{Claim 2:} For any $b\in\mathfrak{A}$, the polynomial $p(t)=(a_0+t(b-a_0))^n$ of $\mathfrak{A}[t]$ is null. In fact, let $\gamma\in\mathbb{R}_+^*$ such that $\mathsf{d}(a+a_2,a_1+a_2)\leq\gamma\mathsf{d}(a,a_1)$ and $\mathsf{d}(\lambda a, \lambda a_1)\leq\gamma|\lambda|\mathsf{d}(a,a_1)$ for any $a,a_1,a_2\in\mathfrak{A}$ and $\lambda\in\mathbb{F}$. Since $|k\cdot\lambda|= k\cdot|\lambda|$ for all $k\in\mathbb{N}$ and $\lambda\in\mathbb{F}$ because $\mathsf{char}(\mathbb{F})=0$, we have that $|(k \cdot 1_{\mathbb{F}})^{-1}| = k^{-1} \cdot |1_{\mathbb{F}}| \neq0$ for all $k\in\mathbb{N}$. 
Take $k_0\in\mathbb{N}$ such that $k_0>|1_{\mathbb{F}}|\gamma^2 r^{-1}\mathsf{d}(b-a_0,0)$. So, for all $k\geq k_0$, it follows that 
\begin{equation}\nonumber
\mathsf{d}(a_0,a_0+(k \cdot 1_{\mathbb{F}})^{-1}(b-a_0))\leq\gamma\mathsf{d}(0,(k \cdot 1_{\mathbb{F}})^{-1}(b-a_0)) \leq\gamma^2 k^{-1}|1_{\mathbb{F}}|\mathsf{d}(0,b-a_0)< r \ .
\end{equation}
 From this, $a_0+(k \cdot 1_{\mathbb{F}})^{-1}(b-a_0)\in B_r(a_0)$ for all positive integer $k\geq k_0$, and hence, $p((k \cdot 1_{\mathbb{F}})^{-1})=(a_0+(k \cdot 1_{\mathbb{F}})^{-1}(b-a_0))^n=0$ for all $k\geq k_0$. 
Thus, the Claim 2 follows from Lemma \ref{mainlemma}, and consequently, $b^n= p(1_{\mathbb{F}})= 0$. Therefore, $\mathfrak{A}\subseteq B_n$, and so $\mathfrak{A}$ is nil of bounded index.
\end{proof}

%

According to what was shown above, we can now establish a proof of Theorem \ref{5.1}.

\begin{proof}[\textbf{Proof of Theorem \ref{5.1}}]
By Lemma \ref{maintheorem}, we have that $\mathfrak{A}$ is nil of bounded index. From this, there is $n\in\mathbb{N}$ such that $a^n=0$ for any $a\in\mathfrak{A}$, and hence, since $\mathfrak{A}$ is an $\mathbb{F}$-algebra with $\mathsf{char}(\mathbb{F})=0$, it follows from Dubnov-Ivanov-Nagata-Higman Theorem (see \cite{DubnIvan43}, \cite{Naga52} and \cite{Higm56}) that $\mathfrak{A}$ is nilpotent.
\end{proof}
%

%
\subsection{The Non-Complete Case}

Let $\mathfrak{A}$ be a metric $\mathbb{F}$-algebra, where $\mathbb{F}$ is a normed field. It is well-known that any metric space has a completion (see \cite{Kapl77}, Theorem 57, p.92). Let $(C(\mathfrak{A}), \overline{d})$ be the completion of $\mathfrak{A}$, that is, $(C(\mathfrak{A}), \bar{d})$ is a complete metric space which contains $(\mathfrak{A},d)$ as a subset dense. Hence, given any $a,b \in C(\mathfrak{A})$, there exist sequences $\{a_n\}_{n\in\mathbb{N}}$ and $\{b_n\}_{n\in\mathbb{N}}$ in $\mathfrak{A}$ converging to $a$ and $b$, and consequently, $\bar{d}(a,b)=\lim_{n\in\mathbb{N}}d(a_n,b_n)$. From this, by continuity of operations ``$+$'', ``$\times$'' and ``$\cdot$'', it is not difficult to show that $C(\mathfrak{A})$ is a complete metric $\mathbb{F}$-algebra. 

In \cite{CiolFreiGonc17}, L. Cioletti, J.A. Freitas and D.J. Gon{\c{c}}alves (2017) proved that, given a normed PI-algebra $A$ over $\mathbb{R}$ (or $\mathbb{C}$), if $C(A)$ is nil, then $A$ is nilpotent. The next result generalizes this last one.
%
%

\begin{corollary}\label{5.5}
Let $\mathbb{F}$ be a normed field with $\mathsf{char}(\mathbb{F})=0$, and $\mathfrak{A}$ a metric $\mathbb{F}$-algebra. If $C(\mathfrak{A})$ is nil, then $\mathfrak{A}$ is nilpotent.
\end{corollary}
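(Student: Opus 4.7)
The plan is to reduce this statement directly to Theorem \ref{5.1} applied to the completion, then transfer the nilpotency back to the original algebra via the inclusion $\mathfrak{A}\subseteq C(\mathfrak{A})$. The excerpt just above the corollary already establishes the key structural fact that $C(\mathfrak{A})$ is itself a complete metric $\mathbb{F}$-algebra (using the continuity of the three operations to extend them to the completion). This is what makes Theorem \ref{5.1} applicable.

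More concretely, I would proceed as follows. First, invoke the observation from the paragraph preceding the corollary that $(C(\mathfrak{A}),\overline{d})$ is a complete metric $\mathbb{F}$-algebra. Second, combining the hypothesis that $C(\mathfrak{A})$ is nil with $\mathsf{char}(\mathbb{F})=0$, apply Theorem \ref{5.1} to conclude that $C(\mathfrak{A})$ is nilpotent; that is, there exists $n\in\mathbb{N}$ such that $C(\mathfrak{A})^n=0$. Third, since $\mathfrak{A}$ sits inside $C(\mathfrak{A})$ as a (dense) $\mathbb{F}$-subalgebra, every product $a_1a_2\cdots a_n$ with $a_i\in\mathfrak{A}$ is also a product of $n$ elements of $C(\mathfrak{A})$, and hence vanishes. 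Therefore $\mathfrak{A}^n=0$ and $\mathfrak{A}$ is nilpotent.

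There is really no substantive obstacle: the nontrivial work has been front-loaded into Theorem \ref{5.1} and into the already-stated fact that the completion of a metric $\mathbb{F}$-algebra is a complete metric $\mathbb{F}$-algebra. The only point one should be mildly careful about is that the embedding $\mathfrak{A}\hookrightarrow C(\mathfrak{A})$ is genuinely an $\mathbb{F}$-algebra homomorphism (so that products computed in $\mathfrak{A}$ coincide with those computed in $C(\mathfrak{A})$); this is immediate from the standard construction of the completion and from the continuity conditions \textit{i)}--\textit{iii)} in the definition of a metric $\mathbb{F}$-algebra. Once that is noted, the conclusion $\mathfrak{A}^n\subseteq C(\mathfrak{A})^n=0$ follows with no further computation.
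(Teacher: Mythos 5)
Your proposal is correct and follows exactly the paper's own argument: apply Theorem \ref{5.1} to the complete metric $\mathbb{F}$-algebra $C(\mathfrak{A})$ to get $(C(\mathfrak{A}))^n=\{0\}$, then use $\mathfrak{A}\subseteq C(\mathfrak{A})$ to conclude $\mathfrak{A}^n=\{0\}$. The extra remark about the embedding being an $\mathbb{F}$-algebra homomorphism is a fair point of care but does not change the route.
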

\begin{proof}
By Theorem \ref{5.1}, we have that $C(\mathfrak{A})$ is nilpotent, and so there is $n\in\mathbb{N}$ such that $(C(\mathfrak{A}))^n=\{0\}$. Since that $\mathfrak{A}\subseteq C(\mathfrak{A})$, it follows that $\mathfrak{A}^n=\{0\}$, and consequently, $\mathfrak{A}$ is a nilpotent algebra.
\end{proof}

%
\subsection{The T-ideals of Identities of a Metric Algebra and its Completion}
Let $\mathbb{F}$ be a field and $\mathbb{F}\langle X \rangle$ the free associative algebra, generated freely by the set $X=\{x_1,x_2,\dots\}$, a countable set of indeterminants. Let $\mathfrak{A}$ be an associative $\mathbb{F}$-algebra. We say that $g=g(x_1,\dots,x_d)\in\mathbb{F}\langle X \rangle$ is a \textit{polynomial identity} of $\mathfrak{A}$, denoted by $g\equiv0$ in $\mathfrak{A}$, if $g(a_1,\dots,a_d)=0$ for any $a_1,\dots,a_d\in\mathfrak{A}$. We denote by $\mathsf{T}(\mathfrak{A})$ the set $\{g\in\mathbb{F}\langle X \rangle : g\equiv0  \mbox{ in }  \mathfrak{A}\}$ of all polynomial identities of $\mathfrak{A}$. For further reading, as well as an overview, on free algebras and algebras with polynomial identities, we indicate the books \cite{Dren00} and \cite{GiamZaic05}. 
The below result improves Proposition 1 from \cite{CiolFreiGonc17}, which states that ``\textit{if $A$ is a normed PI-algebra over $\mathbb{R}$ (or $\mathbb{C}$), then $\mathsf{T}(C(A))=\mathsf{T}(A)$}''.

\begin{proposition}\label{5.6}
Let $\mathbb{F}$ be a normed field. Any metric $\mathbb{F}$-algebra and its completion satisfy the same polynomial identities.
\end{proposition}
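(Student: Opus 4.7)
The plan is to prove the two inclusions $\mathsf{T}(C(\mathfrak{A})) \subseteq \mathsf{T}(\mathfrak{A})$ and $\mathsf{T}(\mathfrak{A}) \subseteq \mathsf{T}(C(\mathfrak{A}))$ separately. The first inclusion is immediate from the fact that $\mathfrak{A}$ is (isomorphic to) a subalgebra of $C(\mathfrak{A})$: any identity that vanishes on all of $C(\mathfrak{A})$ in particular vanishes on $\mathfrak{A}$.

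For the nontrivial inclusion, I would fix $g = g(x_1,\dots,x_d) \in \mathsf{T}(\mathfrak{A})$ and arbitrary elements $a_1,\dots,a_d \in C(\mathfrak{A})$, and show $g(a_1,\dots,a_d)=0$. Since $\mathfrak{A}$ is dense in $C(\mathfrak{A})$, I can pick sequences $\{a_i^{(n)}\}_{n\in\mathbb{N}} \subseteq \mathfrak{A}$ with $a_i^{(n)} \to a_i$ in $C(\mathfrak{A})$ for each $i=1,\dots,d$. Because $g$ is a polynomial with coefficients in $\mathbb{F}$ and the three operations $+$, $\times$ and scalar multiplication $\cdot$ are continuous on $C(\mathfrak{A})$ (this was observed in the preliminaries), a straightforward induction on the length of the word/monomial shows that polynomial evaluation is a continuous map $C(\mathfrak{A})^d \to C(\mathfrak{A})$. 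Hence
\[
g(a_1,\dots,a_d) = \lim_{n\in\mathbb{N}} g(a_1^{(n)},\dots,a_d^{(n)}) = \lim_{n\in\mathbb{N}} 0 = 0,
\]
where each term of the sequence is zero because $g \in \mathsf{T}(\mathfrak{A})$ and $a_i^{(n)} \in \mathfrak{A}$.

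There is essentially no main obstacle: the only thing to be slightly careful about is the continuity of polynomial evaluation, but this follows by a direct induction from the continuity of the three basic operations (a monomial $\lambda x_{i_1} \cdots x_{i_k}$ is a composition of these, and a polynomial is a finite sum of monomials). So the proof reduces to invoking density of $\mathfrak{A}$ in $C(\mathfrak{A})$ together with continuity, and concluding both inclusions, which together yield $\mathsf{T}(\mathfrak{A}) = \mathsf{T}(C(\mathfrak{A}))$.
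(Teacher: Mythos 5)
Your proof is correct and follows essentially the same route as the paper: the trivial inclusion from $\mathfrak{A}\subseteq C(\mathfrak{A})$, and the reverse inclusion by approximating elements of $C(\mathfrak{A})$ with sequences from the dense subalgebra $\mathfrak{A}$ and passing to the limit using continuity of the operations. Your extra remark that continuity of polynomial evaluation needs a short induction on monomials is a point the paper leaves implicit, but it is the same argument.
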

\begin{proof}
Let $\mathfrak{A}$ be a metric $\mathbb{F}$-algebra. Obviously $\mathsf{T}(C(\mathfrak{A}))\subseteq \mathsf{T}(\mathfrak{A})$, because $\mathfrak{A} \subseteq C(\mathfrak{A})$. 	
Reciprocally, take any $f(x_1,\ldots,x_d) \in \mathsf{T}(\mathfrak{A})$ and $a_1,\dots,a_d \in C(\mathfrak{A})$. Since $C(\mathfrak{A})$ is the completion of $\mathfrak{A}$, there exist convergent sequences $\{a_{1n}\}_{n\in\mathbb{N}},\dots,\{a_{dn}\}_{n\in\mathbb{N}}\in\mathfrak{A}$ such that 
$\lim_{n\in\mathbb{N}} a_{in}=a_i$ 
for all $i=1,\dots, d$. From this, since the three operations ``$+$'', ``$\times$'' and ``$\cdot$'' of $\mathfrak{A}$ are continuous, we have that 
\begin{equation}\nonumber
f(a_1,\dots,a_d)=f(\lim_{n\in\mathbb{N}} a_{1n},\dots,\lim_{n\in\mathbb{N}} a_{dn})=\lim_{n\in\mathbb{N}} f( a_{1n},\dots, a_{dn})=\lim_{n\in\mathbb{N}} 0=0 \ .
\end{equation}
Therefore, $f \in \mathsf{T}(C(\mathfrak{A}))$, and the result follows.
\end{proof}

A relevant question is whether \textit{two metric $\mathbb{F}$-algebras satisfying the same polynomial identities are necessarily isomorphic}. The answer is {\bf no}! In fact, let $\mathfrak{A}=\mathbb{Q}$ and $\mathfrak{B}=\mathbb{R}$ be two metric $\mathbb{Q}$-algebras, both equipped with their usual metrics. As $C(\mathbb{Q})=\mathbb{R}$, by the previous proposition, it follows that $\mathfrak{A}$ and $\mathfrak{B}$ satisfy the same polynomial identities. Obviously, $\mathfrak{A}$ and $\mathfrak{B}$ are not isomorphic. Also, $\mathbb{R}$ and $\mathbb{C}$ are non-isomorphic complete metric $\mathbb{R}$-algebras with the same polynomial identities. Therefore, two (complete) metric $\mathbb{F}$-algebras that have the same polynomial identities are not necessarily isomorphic.

%
\subsection{Graded Metric Algebras}
Let $\mathsf{S}$ be a monoid and $\mathfrak{A}$ an $\mathbb{F}$-algebra. An \textit{$\mathsf{S}$-grading on $\mathfrak{A}$} is a decomposition $\Gamma: \mathfrak{A} = \bigoplus_{\xi\in \mathsf{S}} \mathfrak{A}_\xi$ that satisfies $\mathfrak{A}_\xi \mathfrak{A}_\zeta \subseteq \mathfrak{A}_{\xi\zeta}$, for any $\xi, \zeta \in \mathsf{S}$, where $\mathfrak{A}_\xi$'s are vector $\mathbb{F}$-subspaces of $\mathfrak{A}$ (see the books \cite{GiamZaic05} and \cite{NastOyst11}). 
The study of \textit{algebras with gradings} plays an important role in obtaining properties of these algebras, in the non-graded sense. In \cite{BergCohe86}, J. Bergen and M. Cohen (1986) showed that if $\mathsf{G}$ is a finite group with neutral element $e$ and $\mathfrak{A}$ is a $\mathsf{G}$-graded algebra, then $\mathfrak{A}_e$ is a PI-algebra iff $\mathfrak{A}$ is a PI-algebra. See also  \cite{BahtGiamRile98}, due to Yu. Bahturin, A. Giambruno and D. Riley (1998).
An interesting example of a property obtained from a grading is provided by Theorem 3.9 in \cite{Mardua01}, due to A. de Fran\c{c}a and I. Sviridova (2022). It states that ``\textit{if $\mathsf{S}$ is a left cancellative monoid and $\mathfrak{A}$ is an (associative) $\mathbb{F}$-algebra with a finite $\mathsf{S}$-grading, then $\mathfrak{A}_e$ is nilpotent iff $\mathfrak{A}$ is nilpotent}''. 
Let us use this last result to prove the theorem below, and still to give a positive answer (for metric $\mathbb{F}$-algebras) to the question asked by A. de Fran\c{c}a and I. Sviridova in \cite{Mardua01}, namely, they posed the following problem: ``\textit{given a ring $\mathfrak{R}$ with a finite $\mathsf{S}$-grading, does $\mathfrak{R}_e$ nil imply $\mathfrak{R}$ nil/nilpotent?}''. This problem implies K\"othe's Problem (\cite{Mardua01},  Theorem 4.15).

%
%
\begin{theorem}
Let $\mathsf{S}$ be a left cancellative monoid, $\mathbb{F}$ a normed field of characteristic zero and $\mathfrak{A}$ an $\mathbb{F}$-algebra with a finite $\mathsf{S}$-grading. Suppose that $\mathfrak{A}_e$ is a metric $\mathbb{F}$-algebra. If $C(\mathfrak{A}_e)$ is nil, then $\mathfrak{A}$ is a nilpotent algebra. In particular, if $\mathfrak{A}_e$ is a nil complete metric $\mathbb{F}$-algebra, then $\mathfrak{A}$ is a nilpotent algebra.
\end{theorem}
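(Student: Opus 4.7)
The proof plan is short: the statement is essentially a direct combination of Corollary \ref{5.5} and the cited Theorem 3.9 from \cite{Mardua01}, so my strategy is to reduce the nilpotency of $\mathfrak{A}$ to the nilpotency of $\mathfrak{A}_e$, and then reduce the nilpotency of $\mathfrak{A}_e$ to the hypothesis that $C(\mathfrak{A}_e)$ is nil.

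First I would invoke Corollary \ref{5.5}. The hypotheses match perfectly: $\mathbb{F}$ is a normed field of characteristic zero, $\mathfrak{A}_e$ is a metric $\mathbb{F}$-algebra, and $C(\mathfrak{A}_e)$ is nil. The corollary then gives that $\mathfrak{A}_e$ is a nilpotent $\mathbb{F}$-algebra. Next, since $\mathsf{S}$ is a left cancellative monoid and the $\mathsf{S}$-grading on $\mathfrak{A}$ is finite, I can apply Theorem 3.9 of \cite{Mardua01}, which states precisely that under these hypotheses $\mathfrak{A}_e$ is nilpotent iff $\mathfrak{A}$ is nilpotent. This yields the nilpotency of $\mathfrak{A}$.

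For the ``in particular'' clause, if $\mathfrak{A}_e$ is already a nil complete metric $\mathbb{F}$-algebra, then its completion coincides with itself, i.e.\ $C(\mathfrak{A}_e) = \mathfrak{A}_e$, which is nil by hypothesis; the first part then applies verbatim. Alternatively, one can directly invoke Theorem \ref{5.1} on $\mathfrak{A}_e$ to conclude that $\mathfrak{A}_e$ is nilpotent, and then finish with Theorem 3.9 of \cite{Mardua01}.

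There is really no technical obstacle here: both ingredients have already been established, and the only thing to check is that the hypotheses required for Corollary \ref{5.5} (namely that $\mathfrak{A}_e$ itself is a metric $\mathbb{F}$-algebra) and for Theorem 3.9 of \cite{Mardua01} (left cancellativity of $\mathsf{S}$ and finiteness of the grading) are stated in the hypothesis. The only mild point worth making explicit in the write-up is that the ``in particular'' case is subsumed by the general one via $C(\mathfrak{A}_e) = \mathfrak{A}_e$ when $\mathfrak{A}_e$ is complete.
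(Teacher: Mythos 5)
Your proposal matches the paper's proof exactly: the paper likewise applies Corollary \ref{5.5} (resp.\ Theorem \ref{5.1} for the ``in particular'' clause) to conclude that $\mathfrak{A}_e$ is nilpotent, and then finishes with Theorem 3.9 of \cite{Mardua01}. No gaps; your observation that the complete case can also be subsumed via $C(\mathfrak{A}_e)=\mathfrak{A}_e$ is a harmless alternative to the paper's direct appeal to Theorem \ref{5.1}.
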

\begin{proof}
By Corollary \ref{5.5} (resp. Theorem \ref{5.1}), when $C(\mathfrak{A}_e)$ is nil (resp. $\mathfrak{A}_e$ is a nil complete metric $\mathbb{F}$-algebra), it follows that $\mathfrak{A}_e$ is a nilpotent algebra. 
By Theorem 3.9 from \cite{Mardua01}, the result follows.
\end{proof}

\begin{corollary}\label{5.8}
Let $\mathfrak{A}$ be a metric $\mathbb{F}$-algebra with a finite $\mathsf{S}$-grading, where $\mathsf{S}$ is a cancellative monoid and $\mathbb{F}$ is a normed field of characteristic zero. If $C(\mathfrak{A}_e)$ is nil, then $\mathfrak{A}$ is nilpotent.
\end{corollary}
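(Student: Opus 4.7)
The plan is to deduce Corollary \ref{5.8} as a straightforward specialization of the preceding theorem, so the bulk of the work is just checking that the hypotheses of that theorem are met under the assumptions of the corollary.

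First I would note that a cancellative monoid is, in particular, left cancellative, so the monoidal hypothesis of the preceding theorem is automatically satisfied. The only substantive verification is that $\mathfrak{A}_e$ is itself a metric $\mathbb{F}$-algebra. Since $e$ is the identity of $\mathsf{S}$, the grading relation $\mathfrak{A}_e\mathfrak{A}_e\subseteq \mathfrak{A}_{ee}=\mathfrak{A}_e$ makes $\mathfrak{A}_e$ an $\mathbb{F}$-subalgebra of $\mathfrak{A}$. I would then equip $\mathfrak{A}_e$ with the restriction $\mathsf{d}|_{\mathfrak{A}_e\times\mathfrak{A}_e}$ of the metric $\mathsf{d}$ of $\mathfrak{A}$, and observe that axioms i), ii), iii) in the definition of a metric $\mathbb{F}$-algebra, being universally quantified over elements, continue to hold on $\mathfrak{A}_e$ with the same constant $\gamma$.

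With $\mathfrak{A}_e$ recognized as a metric $\mathbb{F}$-algebra, the completion $C(\mathfrak{A}_e)$ is well-defined and, by hypothesis, nil. All conditions of the preceding theorem being in place, I would simply invoke it to conclude that $\mathfrak{A}$ is nilpotent.

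I do not foresee a genuine obstacle: the only mild subtlety is the implicit identification of $C(\mathfrak{A}_e)$ as the completion of $\mathfrak{A}_e$ with its restricted metric (rather than, say, its closure inside $C(\mathfrak{A})$), which is the natural interpretation consistent with the earlier development in \S2.1. Once that is agreed, the proof amounts to two short lines citing the previous theorem.
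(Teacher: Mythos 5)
Your proof is correct and is precisely the argument the paper intends: the paper states Corollary \ref{5.8} without any written proof, treating it as an immediate specialization of the preceding theorem, and your verification that $\mathfrak{A}_e$ is a subalgebra inheriting the metric-$\mathbb{F}$-algebra structure by restriction of $\mathsf{d}$ (together with the observation that cancellative implies left cancellative) supplies exactly the details that needed checking. No gap remains.
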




\begin{corollary}[K\"othe's Problem for Metric Algebras]\label{5.9}
The K\"othe's Problem has a positive answer for any complete metric algebra over a normed field of characteristic zero.
\end{corollary}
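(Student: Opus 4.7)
The plan is to combine the reduction of K\"othe's Problem to a graded nilness question (due to de Fran\c{c}a and Sviridova, \cite{Mardua01}, Theorem 4.15) with our affirmative resolution of that graded question in the metric setting, namely Corollary \ref{5.8} together with the theorem immediately preceding it.

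First, I would recall the content of Theorem 4.15 of \cite{Mardua01}, already cited in this subsection: a positive answer to the question ``given a ring $\mathfrak{R}$ equipped with a finite $\mathsf{S}$-grading by a (left) cancellative monoid $\mathsf{S}$, does $\mathfrak{R}_e$ nil imply $\mathfrak{R}$ nil (or nilpotent)?'' implies a positive answer to K\"othe's Problem. Second, I would apply Corollary \ref{5.8} (alternatively, the theorem just before it): for any complete metric $\mathbb{F}$-algebra $\mathfrak{A}$ over a normed field $\mathbb{F}$ of characteristic zero, equipped with a finite grading by a cancellative monoid whose neutral component $\mathfrak{A}_e$ is a nil metric subalgebra, $\mathfrak{A}$ is necessarily nilpotent. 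Chaining these two inputs yields K\"othe's Problem affirmatively on the class of complete metric $\mathbb{F}$-algebras over normed fields of characteristic zero.

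The only delicate point I anticipate is verifying that the reduction of \cite{Mardua01} can be instantiated without leaving the class of complete metric $\mathbb{F}$-algebras; that is, starting from such an $\mathfrak{A}$, the auxiliary graded ring produced by the reduction should itself carry a compatible complete metric structure, with its neutral component realized as a metric subalgebra in the sense required by Corollary \ref{5.8}. Since the construction in \cite{Mardua01} is purely algebraic and built from the original algebra by elementary algebraic operations (finite direct sums, subalgebras, quotients), I would expect this to be resolved by endowing the resulting object with the natural induced metric (for finite direct sums, for instance, the max of the component metrics), and checking compatibility with the axioms i)--iii) of a metric $\mathbb{F}$-algebra; this routine compatibility check is the step that would merit the most attention in a full write-up.
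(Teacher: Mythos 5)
Your overall route---chain Theorem 4.15 of \cite{Mardua01} with Corollary \ref{5.8} and the theorem preceding it---is exactly the chain of citations the paper itself gestures at; Corollary \ref{5.9} is stated there with no written proof, so there is no more explicit argument to compare against. The difficulty is that the ``delicate point'' you flag at the end is not the routine compatibility check you describe, and your proposed fix (put the max metric on finite direct sums and verify axioms i)--iii)) does not touch the actual obstruction. A Krempa-type reduction of K\"othe's Problem to the graded nil question requires showing that $M_2(N)$ (or a similarly graded ring whose neutral component is built from $N$) is nil, where $N$ ranges over the \emph{nil one-sided ideals} of the given algebra $\mathfrak{A}$, not over $\mathfrak{A}$ itself. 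Such an ideal $N$ is a metric subalgebra, but it need not be closed in $\mathfrak{A}$, hence need not be \emph{complete}; so Theorem \ref{5.1} does not apply to it. To use Corollary \ref{5.5} or Corollary \ref{5.8} instead, one needs $C(N)=\overline{N}$ to be nil, which is precisely what is unknown: a limit of nilpotent elements need not be nilpotent, the Baire argument of Lemma \ref{maintheorem} needs completeness of the space being written as $\bigcup_i B_i$, and Proposition \ref{5.6} only transfers the identity $x^n$ once $N$ is already known to be nil of \emph{bounded} index.

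Concretely, the missing statement---which neither the paper's results nor your proposal supplies---is that every nil one-sided ideal $N$ of a complete metric $\mathbb{F}$-algebra with $\mathsf{char}(\mathbb{F})=0$ is nilpotent (equivalently, that $\overline{N}$ is nil). If that were established, then $N+N\mathfrak{A}$ would already be a nonzero nilpotent two-sided ideal and K\"othe's Problem for $\mathfrak{A}$ would follow directly, with no need for the graded machinery at all; as written, your argument (like the paper's implicit one) only covers algebras all of whose nil one-sided ideals are closed. You should either prove this missing lemma or verify from the actual proof of Theorem 4.15 in \cite{Mardua01} that every auxiliary ring it constructs is complete in the induced metric---neither step is routine.
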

%
%
%
%


\bibliographystyle{amsplain}


\end{document}